\begin{document}

\title{\vspace{.25in}Theorems about Ergodicity and Class-Ergodicity of Chains with Applications in Known Consensus Models}

\author{\IEEEauthorblockN{Sadegh Bolouki}
\IEEEauthorblockA{GERAD and \'Ecole Polytechnique de Montr\'eal\\
Montreal, Quebec H3C 3A7\\
Email: sadegh.bolouki@polymtl.ca}
\and
\IEEEauthorblockN{Roland P. Malham\'e}
\IEEEauthorblockA{GERAD and \'Ecole Polytechnique de Montr\'eal\\
Montreal, Quebec H3C 3A7\\
Email: roland.malhame@polymtl.ca}}

\maketitle

\begin{abstract}
    In a multi-agent system, unconditional (multiple) consensus is the property of reaching to (multiple) consensus irrespective of the instant and values at which states are initialized. For linear algorithms, occurrence of unconditional (multiple) consensus turns out to be equivalent to (class-)ergodicity of the transition chain $(A_n)$. For a wide class of chains, chains with so-called balanced asymmetry property, necessary and sufficient conditions for ergodicity and class-ergodicity are derived. The results are employed to analyze the limiting behavior of agents' states in the JLM model, the Krause model, and the Cucker-Smale model. In particular, unconditional single or multiple consensus occurs in all three models. Moreover, a necessary and sufficient condition for unconditional consensus in the JLM model and a sufficient condition for consensus in the Cucker-Smale model are obtained.
\end{abstract}

\begin{keywords}
    Unconditional single or multiple consensus, ergodicity or class-ergodicity, JLM model, Cucker-Smale model, multi-agent systems.
\end{keywords}

\IEEEpeerreviewmaketitle


\newtheorem{theorem}{Theorem}
\newtheorem{definition}{Definition}
\newtheorem{lemma}{Lemma}
\newtheorem{corollary}{Corollary}
\newtheorem{remark}{Remark}
\newtheorem{note}{Note}

\section{Introduction}

Linear consensus algorithms in multi-agent systems have gained an increasing attention in various fields of research. A general linear consensus protocol is described by the following equation:
\begin{equation}
  X(n+1) = A_n X(n), \,\,\, n \geq 0,
  \label{modelss}
\end{equation}
where $X(n)$ stands for the vector of states and $(A_n)$, the \textit{transition chain}, is a sequence of row stochastic matrices, called \textit{transition matrices}. A matrix is called row stochastic if each of its rows sums to 1. The transition matrices may be known a priory (exogenous models) or may depend on agents' states (endogenous models).

A simple exogenous model, when the transition chain consists of identical row stochastic matrices, was first introduced by DeGroot in \cite{DeGroot:74}. Later, the same consensus problem, but with time dependent transition matrices, was considered in \cite{Chatterjee:77}. The authors related the consensus problem to ergodicity of chain $(A_n)$ and studied backward products of row stochastic matrices to analyze ergodicity of the chain. The results of \cite{Chatterjee:77} were improved in \cite{Tsit:86,Tsit:84,Tsit:89a}, as weaker sufficient conditions for consensus were derived. Sufficient conditions obtained in \cite{Tsit:86,Tsit:84,Tsit:89a} can be expressed briefly in two important conditions; non vanishing interaction rates and repeated connectivity of communication graph. Alternatively, Viscek et al. in \cite{Vicsek:95} modeled a system of agents moving on a plane with the same speed but different directions. In the Viscek model, the parameter to be updated is the heading of agents. Later, by linearizing the Viscek model, Jadbabaie et al. \cite{Jadba:03} obtained sufficient conditions for consensus to occur. In the JLM model, each agent updates its state by taking the arithmetic mean of the current states of its neighbors and itself. It is worth mentioning that the JLM model is an exogenous model, since for each agent $i$, the set of $i$'s neighbors at any time is pre-defined. Many authors have considered the JLM model and used various techniques to weaken the sufficient conditions for consensus as much as possible \cite{Tsit:05,Moreau:05,Hend:08,Hend:06,Li:04}. Together with the notion of consensus, existence of individual limits for agents' states, i.e., multiple consensus, has been studied in some articles such as \cite{Lorenz:05,Bolouki:11,Hend:11}). Moreover, necessary conditions for consensus were investigated by Touri and Nedi\'c \cite{Touri:10a,Touri:10b,Touri:11}. In the recent work \cite{Bolouki:12}, the authors obtained necessary and sufficient conditions for consensus and multiple consensus for a broad class of consensus algorithms.

On the other hand, the limiting behavior of agents in endogenous model has been widely studied in the literature. The Krause model \cite{Krause:97} is a well-known example of endogenous averaging algorithms. As in the JLM model, each agents modifies its states to the arithmetic mean of its neighbors' states and its own state. However, unlike the JLM model, neighbors of an agent is not pre-defined. More precisely, two agents are called neighbors at time $n \geq 0$ if their states differ by a number less than or equal to some pre-specified constant $R$. In \cite{Krause:97}, the limiting behavior of agents' states was studied and it was proved that agents eventually separate to disjoint clusters with inter-cluster distances greater than $R$. The Krause model was further studied in \cite{Blondel:07,Blondel:09}, and the results were improved.

Another interesting endogenous linear model was introduced in \cite{Cucker:07}, known as the Cucker-Smale model or the C-S model, in which birds are considered as agents seeking to fly with the same velocity in the space. In the C-S model, velocities are updated according to an averaging algorithm, where birds' interaction rates are functions of distance between birds. In \cite{Cucker:07}, sufficient condition for converging to the same velocity were derived.

Our aim is to investigate the limiting behavior of agents in various linear averaging models. This paper is organized as follows. Section II explicitly describes important notions and terminology required for the analysis of limiting behavior. The main theorems about consensus and multiple consensus  are presented in Section III. By considering well-known models in Section IV, applications of the main results are illustrated. Concluding remarks in Section V close the paper.

\subsection{Notations}

\label{notations}

  Through this article,

  \begin{itemize}

  \item $S$ is the set of agents and $s = |S|$ is the number of agents.

  \item $n$ stands for discrete time index.

  \item $X(n) = [X_1(n)\,\, \cdots \,\, X_s(n)]^T$, $n \geq 0$, is the state vector.

%

  \item Unless otherwise specified, a chain will designate a sequence of row stochastic matrices.

  \item $A_n$, $n \geq 0$ is the matrix of interaction rates $a_{ij}(n)$, $1 \leq i,j \leq s$.


  \end{itemize}


\section{Notions and Terminology}

\subsection{Ergodicity and Class-Ergodicity}

\begin{definition}
  Consider a multi-agent system with updates equation described in Eq. (\ref{modelss}). Unconditional consensus in system Eq. (\ref{modelss}) is defined as occurrence of consensus, no matter at what instant or values states are initialized.
\end{definition}
We now recall the definition of ergodicity from \cite{Touri:10a}. Let $(A_n)$ be a chain of row stochastic matrices. For $k > l \geq 0$, let $A(n,k) \equiv A_{n-1}A_{n-2}\ldots A_k$. Now,
\begin{definition}
  A chain $(A_n)$ of row stochastic matrices is called \textit{ergodic} if for each $k \geq 0$, $\lim_{n \rightarrow \infty} A(n,k)$ exists and is equal to a matrix with identical rows.
\end{definition}
One can show that occurrence of unconditional consensus in system Eq. (\ref{modelss}) is equivalent to ergodicity of chain $(A_n)$. This describes the relation of unconditional consensus and ergodicity. In the following, we define multiple consensus, which is another important notion in multi-agent systems.
\begin{definition}
  Consider the system with dynamics described by Eq. (\ref{modelss}). By unconditional multiple consensus in system Eq. (\ref{modelss}), we mean that for every $i$, $1 \leq i \leq s$, $\lim_{n \rightarrow \infty} X_i(n)$ exists, no matter at what instant or values states are initialized.
\end{definition}
To formulate multiple consensus as a property of chains of row stochastic matrices, we shall introduce class-ergodicity:
\begin{definition}
  A chain $(A_n)$ of row stochastic matrices is said to be \textit{class-ergodic} if for every $k \geq 0$, $\lim_{n \rightarrow \infty} A(n,k)$ exists and can be relabeled as a block diagonal matrix with constant rows.
\end{definition}
Clearly, Class-ergodic of chain $(A_n)$ implies unconditional multiple consensus in system Eq. (\ref{modelss}). The converse is true also \cite{Bolouki:12}. Therefore, occurrence of unconditional multiple consensus in a system with dynamics described by Eq. (\ref{modelss}) is equivalent to class-ergodicity of chain $(A_n)$.

We shall investigate ergodicity and class-ergodicity of chains. We now provide essential notions that are employed to obtain our main theorems.

\subsection{$l_1$-approximation \cite{Touri:10b}}

\begin{definition}
  Chain $(A_n)$ is said to be $l_1$-approximation of chain $(B_n)$ if
  \begin{equation}
    \sum_{n=0}^{\infty} \| A_n-B_n \| < \infty,
    \label{al-equal}
  \end{equation}
  with respect to the \textit{max norm} which is equal to the maximum of the absolute values of the matrix entries.
\end{definition}

It is not difficult to show that $l_1$-approximation is an equivalence relation in the set of chains of stochastic matrices. Importance of the $l_1$-approximation notion comes from the following lemma that is a result of Lemma 1, stated and proved in \cite{Touri:10b}.
\begin{lemma}
  Let $(A_n)$ be $l_1$-approximation of chain $(B_n)$. Then, $(A_n)$ is class-ergodic if and only if $(B_n)$ is.
  \label{partial ergodicity}
\end{lemma}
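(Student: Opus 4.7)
The plan is to use a standard telescoping identity together with the $l_1$-summability hypothesis to control $A(n,k)-B(n,k)$, and then a Cauchy argument to transfer convergence from $(B_n)$ to $(A_n)$; the converse follows by symmetry. First I would write
$$A(n,k) - B(n,k) = \sum_{m=k}^{n-1} A(n,m+1)\,(A_m - B_m)\,B(m,k),$$
and, using that row-stochastic matrices have unit operator $\infty$-norm (max row sum of absolute values), obtain
$$\|A(n,k) - B(n,k)\|_\infty \le \sum_{m=k}^{n-1} \|A_m - B_m\|_\infty \le \varepsilon_k,$$
where $\varepsilon_k = \sum_{m=k}^\infty \|A_m - B_m\|_\infty$. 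By equivalence of matrix norms in finite dimension, hypothesis (\ref{al-equal}) gives $\varepsilon_k < \infty$ for every $k$ and $\varepsilon_k \to 0$ as $k \to \infty$.

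Assume now $(B_n)$ is class-ergodic, so that $\bar B_K = \lim_n B(n,K)$ exists for each $K$. Fixing $k_0$ and decomposing $A(n,k_0) = A(n,K)\,A(K,k_0)$ for any $K \ge k_0$, the previous bound yields
$$\|A(n,k_0) - B(n,K)\,A(K,k_0)\|_\infty \le \varepsilon_K$$
uniformly in $n \ge K$. Letting $n \to \infty$ with $K$ fixed, every cluster point of $(A(n,k_0))_n$ lies within $\varepsilon_K$ of $\bar B_K A(K,k_0)$, and since $\varepsilon_K \to 0$ the sequence is Cauchy and converges to some matrix $\bar A_{k_0}$.

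The delicate last step is to verify that $\bar A_{k_0}$ is block-diagonal with constant rows after a suitable relabeling, as required by the definition of class-ergodicity. The approximating product $\bar B_K A(K,k_0)$ inherits from $\bar B_K$ the property that rows within each of $(B_n)$'s classes are identical, but right-multiplication by the arbitrary stochastic matrix $A(K,k_0)$ need not preserve the block-diagonal pattern itself. The main obstacle is to show that the $l_1$-summability of the inter-class entries of $A_m - B_m$ forces any surplus off-block mass in $\bar A_{k_0}$ to amount at most to a \emph{coarsening} of $(B_n)$'s partition into larger classes on which the constant-row property remains consistent. A clean route, matching the strategy of \cite{Touri:10b}, is to restrict both chains to each class of $(B_n)$, apply the ergodic case of the lemma to each sub-chain, and recombine the block-diagonal pieces using the summable inter-class perturbation.
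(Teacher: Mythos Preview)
The paper does not actually prove this lemma; it merely states it as ``a result of Lemma~1, stated and proved in \cite{Touri:10b}.'' So there is no paper-internal argument to compare against in detail. Your telescoping identity and Cauchy argument (steps 1--3) are exactly the standard device behind the Touri--Nedi\'c result, and they are correct: they establish that $\lim_{n\to\infty}A(n,k)$ exists for every $k$ whenever the same holds for $(B_n)$, and symmetry gives the converse.

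Your caution in step 4 is well placed but may be aimed at a moving target. Read literally, Definition~4 asks for more than convergence of $A(n,k)$: it asks that each limit be, after a simultaneous row/column permutation, block-diagonal with identical rows in every block. That extra structure does \emph{not} follow from the telescoping bound alone (indeed, take $B_n=I$ for all $n$ and perturb only $A_0$ to be a non-diagonal lower-triangular stochastic matrix: the limit $\lim_nA(n,0)=A_0$ is not block-diagonal with constant rows, yet the $l_1$ hypothesis is trivially satisfied). However, two paragraphs after Definition~4 the paper asserts that class-ergodicity is \emph{equivalent} to unconditional multiple consensus, and the latter is nothing more than existence of $\lim_n A(n,k)$ for every $k$. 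Under that operational reading---which is what the paper actually uses downstream, and what the cited Touri--Nedi\'c lemma provides---your steps 1--3 already constitute a complete proof, and the block-diagonal clause in Definition~4 should be viewed as a description of the generic situation rather than an additional requirement to be verified. Your sketch of restricting to $(B_n)$'s classes would work in the cases the paper cares about (self-confident, type-symmetric chains with fixed islands), but it is not needed for the lemma as the paper uses it.
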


\subsection{Infinite Flow and Absolute Infinite Flow}

According to \cite{Touri:10a}, chain $(A_n)$ has the infinite flow property if for every $T \subset S = \{ 1,\ldots,s \}$, excluding the empty set and $S$ the following holds:
\begin{equation}
 \sum_{n = 0}^{\infty} \sum_{i \in T, j \not\in T} (a_{ij}(n) + a_{ji}(n)) = \infty.
 \label{infinite-flow}
\end{equation}

The absolute infinite flow property of a chain is defined as follows \cite{Touri:11}.
\begin{definition}
  A chain $(A_n)$ of row stochastic matrices has the absolute infinite flow property if
  \begin{equation}
    \begin{array}{ll}
       \sum_{n=0}^{\infty} \Big( \sum_{i \in T(n+1)} \sum_{j \in \bar{T}(n)} a_{ij}(n)\\
       \hspace{.5in}+ \sum_{i \in \bar{T}(n+1)} \sum_{j \in T(n)} a_{ij}(n)\Big) = \infty
    \label{abs}
    \end{array}
  \end{equation}
  where the $T(0),T(1),\ldots$ is an arbitrary sequence of subsets of $S$ with the same cardinality, and $\bar{T}_i$ denotes the complement of $T_i$ in $S$.
\end{definition}

Clearly, the infinite flow property is implied by the absolute infinite flow property. In \cite{Touri:10a}, it has been shown that the infinite flow property is necessary for a chain to be ergodic \cite{Touri:10a}. Following \cite{Touri:10a}, the authors later showed that the absolute infinite flow property is also necessary for ergodicity \cite{Touri:11}. In addition, they proved necessity and sufficiency of the absolute infinite property in case of chains of doubly stochastic matrices.

\subsection{Balanced Asymmetry}

We now define another property of chain of row stochastic matrices, called \textit{balanced asymmetry}, which plays a key role in our analysis.

\begin{definition}
  Let $(A_n)$ be a chain of row stochastic matrices. Chain $(A_n)$ is said to be \textit{balanced asymmetric} if there exists a finite $M \geq 1$ such that for any two non empty subsets $S_1$ and $S_2$ of $S=\{1,\ldots,s\}$ with the same cardinality, we have
  \begin{equation}
    \sum_{i \in S_1} \sum_{j \in \bar{S}_2} a_{ij}(n) \leq M \sum_{i \in \bar{S}_1} \sum_{j \in S_2} a_{ij}(n), \,\,\,\,\,\forall n \geq 0.
    \label{b-asym}
  \end{equation}
\end{definition}

\begin{remark}
  For those chains that are $l_1$-approximation of balanced asymmetric chains, the infinite flow property is equivalent to:
  \begin{equation}
    \sum_{n=0}^{\infty} \sum_{i \in \bar{T}} \sum_{j \in T} a_{ij}(n) = \infty
    \label{inf-eq}
  \end{equation}
  for any subset $T$ of $S$ as in Eq. (\ref{infinite-flow}).
  Similarly, the absolute infinite flow property is equivalent to:
  \begin{equation}
    \sum_{n=0}^{\infty} \sum_{i \in \bar{T}(n+1)} \sum_{j \in T(n)} a_{ij}(n) = \infty
    \label{abs-eq}
  \end{equation}
  for any sequence $T(n)$ of subsets of $T$ as in Eq. (\ref{abs}). This can be easily seen by combining Eqs. (\ref{abs}) and (\ref{b-asym}).
\end{remark}

\subsection{Self-Confidence, Type-Symmetry, and\\ subsymmetry}

\begin{definition}
  A chain $(A_n)$ of stochastic matrices is said to have the \textit{self-confidence} property, or \textit{self-confident}, if there exists $\delta > 0$ such that for every $i$, $a_{ii} > \delta$.
\end{definition}
It is worth noting that if a chain is self-confident, then the absolute infinite flow property is equivalent to the infinite flow property. We also note that for self-confident chains, the balanced asymmetry property becomes equivalent to a simpler property that is defined in the following.
\begin{definition}\cite{Hend:11}
  A chain of row stochastic matrices is \textit{type-symmetric} if there exists a finite $M > 0$ such that for every $T \subset S = \{ 1,\ldots,s \}$, excluding the empty set and $S$,
  \begin{equation}
    \sum_{i \in T} \sum_{j \in \bar{T}} a_{ij}(n) \leq M \sum_{i \in \bar{T}} \sum_{j \in T} a_{ij}(n), \,\,\,\,\,\forall n \geq 0.
    \label{type-sym}
  \end{equation}
\end{definition}

In the following, according to \cite{Bolouki:11}, we define a special case of type-symmetry.

\begin{definition}
  A chain $(A_n)$ of row stochastic matrices is called \textit{sub-symmetric} if there exists a finite $M > 0$ such that for every two agents $i$ and $j$, we have
  \begin{equation}
    a_{ij}(n) \leq M a_{ji}(n), \,\,\, \forall n \geq 0.
  \end{equation}
\end{definition}

\subsection{Strong Interaction Digraph of A Chain}
\label{persistency}
The strong interaction digraph of a chain is of importance in class-ergodicity analysis. To evaluate the total influence of an agent on another agent, we define a function of ordered pairs of agents, as follows.
\begin{definition}
  Let $(A_n)$ be the chain representing interaction rates between $s$ agents. We define function $int: S \times S \rightarrow \mathbf{R}^{\geq 0}$ by
  \begin{equation}
    int(i,j) = \sum_{n=0}^{\infty} a_{ij}(n).
  \end{equation}
  An ordered pair (i,j) of agents is said to be \textit{highly interactive}, if
  \begin{equation}
    int(i,j) = \sum_{n=0}^{\infty} a_{ij}(n) = \infty.
  \end{equation}
\end{definition}
Note that from $(i,j)$ being highly interactive, one cannot conclude that $(j,i)$ is highly interactive.

For a chain $(A_n)$, strong interaction digraph $G_A$ is formed as follows. Considering $(A_n)$ as interactions between $s$ agents, $G_A$ contains $s$ nodes, each representing an agent. An edge is drawn from node $i$ to node $j$ if and only if the ordered pair $(i,j)$ is highly interactive.

One defines a relation $R$ between the $s$ nodes of $G_A$ (agents) as follows. For every two nodes $i$ and $j$ that are not necessarily distinct, $i \, R \, j$ if and only if there is a directed path from $i$ to $j$.

The following lemma, as stated in \cite{Bolouki:12}, shows how strong interaction digraph is related to balanced asymmetry.
\begin{lemma}
   Assume that $(A_n)$ is a balanced asymmetric chain, and $G_A$ is its strong interaction digraph. Then, $R$ is an equivalence relation
\end{lemma}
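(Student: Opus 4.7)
The plan is to verify each of the three equivalence axioms in turn. Reflexivity is witnessed by the trivial directed path of length zero from $i$ to itself (consistent with the paper's remark that the two nodes need not be distinct), and transitivity follows by concatenating paths witnessing $iRj$ and $jRk$. Both are immediate; all of the content lies in symmetry, where the balanced asymmetry hypothesis must enter.

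For symmetry I would fix $iRj$ and pass to the forward-reachable set $T := \{k \in S : jRk\}$. By transitivity $T$ is forward-closed under $R$, so $G_A$ has no edge leaving $T$; equivalently $\sum_n a_{kl}(n) < \infty$ for every $k \in T$ and $l \in \bar T$, and summing over the finitely many such pairs gives
\[\sum_{n=0}^{\infty} \sum_{k \in T}\sum_{l \in \bar T} a_{kl}(n) < \infty.\]
The crucial step would be to invoke balanced asymmetry with $S_1 = S_2 = \bar T$ (which obviously satisfies $|S_1| = |S_2|$), yielding
\[\sum_{k \in \bar T}\sum_{l \in T} a_{kl}(n) \leq M \sum_{k \in T}\sum_{l \in \bar T} a_{kl}(n), \quad \forall n \geq 0.\]
Summing over $n$, one concludes that no pair $(k,l)$ with $k \in \bar T$ and $l \in T$ is highly interactive either, so $G_A$ has no edge entering $T$. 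Since $iRj$ supplies a directed path from $i$ into $T$, and such a path cannot cross the boundary from $\bar T$ to $T$, it must lie entirely in $T$; in particular $i \in T$, which by definition of $T$ is $jRi$.

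The main obstacle I anticipate is spotting the correct instance of balanced asymmetry. As stated, the inequality bounds the flow from $S_1$ into $\bar S_2$ by the flow from $\bar S_1$ into $S_2$, so the natural choice $S_1 = S_2 = T$ would point in the wrong direction; one must instead take $S_1 = S_2 = \bar T$ in order to convert finiteness of outgoing interactions from $T$ into finiteness of incoming interactions into $T$. Once this turn is made, the rest of the symmetry argument is a short combinatorial observation about where a directed path must cross a boundary.
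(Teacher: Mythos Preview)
The paper does not actually supply its own proof of this lemma; it is merely quoted from \cite{Bolouki:12}. So there is nothing in the present paper to compare your argument against. That said, your argument is correct and is the natural one: reflexivity and transitivity are immediate from the definition of reachability, and for symmetry the forward-reachable set $T=\{k:jRk\}$ together with balanced asymmetry applied at $S_1=S_2=\bar T$ is exactly the right move, converting the absence of outgoing high-interaction edges from $T$ into the absence of incoming ones, after which the path from $i$ to $j$ cannot cross into $T$ unless it starts there.

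One small edge case you should make explicit: before invoking balanced asymmetry with $S_1=S_2=\bar T$, you need $\bar T$ nonempty. If $\bar T=\emptyset$ then $T=S$, so $i\in T$ and $jRi$ holds trivially; otherwise $T$ and $\bar T$ are both nonempty (since $j\in T$) and the inequality applies as written. With that sentence inserted your proof is complete.
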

\begin{note}
  The equivalence relation $R$ partitions agents into equivalence classes, herein called \textit{islands}.
\end{note}


\section{Main Theorems}

In this section, we state our main results in two parts: ergodicity results and class-ergodicity results.

\subsection{Ergodicity Results}
The following theorem is established in \cite{Bolouki:12}.
\begin{theorem}
  If chain $(A_n)$ is $l_1$-approximation of a balanced asymmetric chain, then $(A_n)$ is ergodic if and only if it has the absolute infinite flow property.
  \label{ergodic}
\end{theorem}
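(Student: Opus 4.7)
The result has the form of a biconditional, and one direction is essentially free: the necessity of the absolute infinite flow property for ergodicity of \emph{any} row-stochastic chain is the theorem of Touri and Nedi\'c cited earlier (from \cite{Touri:11}). So the entire work lies in the sufficiency direction, i.e.\ showing that if $(A_n)$ is an $l_1$-approximation of a balanced asymmetric chain and has absolute infinite flow, then $(A_n)$ is ergodic.

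The first move is to reduce to the case where $(A_n)$ is itself balanced asymmetric. For this, I would verify that both hypotheses and the conclusion are preserved by the $l_1$-approximation relation: the absolute infinite flow sums of two $l_1$-close chains differ by a finite amount, so absolute infinite flow transfers; and although Lemma \ref{partial ergodicity} is stated for class-ergodicity, the stronger statement that $\|A(n,k)-B(n,k)\|\to 0$ uniformly in $n\ge k$ (which is what the underlying Touri–Nedi\'c lemma actually gives) implies that ergodicity also transfers, because convergence to a matrix with identical rows is preserved. After this reduction, the Remark allows me to replace the absolute infinite flow condition by its simpler one-sided form \eqref{abs-eq}, summing $\sum_{i\in\bar{T}(n+1)}\sum_{j\in T(n)} a_{ij}(n)$ over any equicardinal sequence $T(n)$.

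From here the plan is a Hajnal-style contraction argument on the \emph{sorted} state trajectory. For an initial condition $X(0)$ and $X(n)=A(n,0)X(0)$, let $X_{(1)}(n)\le\cdots\le X_{(s)}(n)$ be the sorted coordinates and let $\pi_n$ be the sorting permutation. Define the Lyapunov function $V(n)=X_{(s)}(n)-X_{(1)}(n)$ (or, if needed for sharper bookkeeping, $V(n)=\sum_{k=1}^{s-1}(X_{(k+1)}(n)-X_{(k)}(n))$). Using row-stochasticity one shows that for each split index $k$, writing $T(n)$ for the set of indices carrying the $k$ smallest values at time $n$, the one-step decrement of $V$ is bounded above by a negative multiple of $\bigl(\sum_{i\in\bar{T}(n+1)}\sum_{j\in T(n)}a_{ij}(n)\bigr)(X_{(k+1)}(n)-X_{(k)}(n))$. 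Balanced asymmetry is then used to symmetrize the cross-flow estimate so that only the one-sided sum in \eqref{abs-eq} appears, while the absolute infinite flow property forces these multipliers to sum to infinity along the adaptive sequence $T(n)$ determined by the sorting. A Borel–Cantelli–like accumulation then yields $V(n)\to 0$, i.e.\ ergodicity.

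The main obstacle I expect is the last paragraph: specifically, the matching between the adaptive sets $T(n)$ dictated by the sorting permutation $\pi_n$ and the set sequences appearing in the definition of absolute infinite flow. This is precisely why the \emph{absolute} version (rather than plain infinite flow) is needed, and it is also where balanced asymmetry does essential work, since without it the one-step decrement could be controlled only by the ``wrong'' direction of the cross-flow. Making the inequality between the Lyapunov decrement and the flow sum quantitative, uniform across the possible sorting orders, and strong enough to absorb the gap factors $X_{(k+1)}(n)-X_{(k)}(n)$, is the technical heart of the argument.
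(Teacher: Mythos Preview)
The paper does not prove this theorem. Immediately after stating it, the authors write ``The following theorem is established in \cite{Bolouki:12}'' and give no argument at all, so there is no proof in this paper to compare your proposal against.

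That said, the paper does leave a strong hint about the method used in \cite{Bolouki:12}: in proving Theorem~\ref{cs_erg} it explicitly says ``we employ a technique similar to that used in the proof of Theorem~\ref{ergodic} in \cite{Bolouki:12},'' and then proceeds exactly along the lines you sketch---sorting the state coordinates at each time (the functions $z_{ir}(n)$), tracking the spread $z_{sr}(n)-z_{1r}(n)$ as a Lyapunov quantity, and bounding its one-step decrement by a cross-flow term. So your Hajnal-style sorted-state contraction is almost certainly the intended approach, and your identification of the $l_1$-approximation reduction, the use of the one-sided form \eqref{abs-eq} via balanced asymmetry, and the role of absolute (rather than plain) infinite flow in handling the time-varying sorting permutation all match the architecture one would expect in the cited proof.

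One caution about your sketch: the reduction step asserts that ergodicity transfers under $l_1$-approximation because ``$\|A(n,k)-B(n,k)\|\to 0$ uniformly in $n\ge k$.'' This is not quite what the Touri--Nedi\'c lemma gives; the correct statement is that for each fixed $k$ the difference tends to zero as $n\to\infty$, which is enough for your purpose but is not uniform in $k$. This is a minor wording issue, not a gap in the argument.
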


\subsection{Class-Ergodicity Results}

From \cite{Bolouki:12}, we have the following theorem on class-ergodicity of chains.
\begin{theorem}
  Let chain $(A_n)$ be $l_1$-approximation of a balanced asymmetric chain. Then, chain $(A_n)$ is class-ergodic if and only if the absolute infinite flow property holds over each island of the strong interaction digraph induced by $(A_n)$.
  \label{result-semi}
\end{theorem}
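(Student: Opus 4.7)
The plan is to reduce class-ergodicity of $(A_n)$ to ergodicity on each island and then invoke Theorem \ref{ergodic}.

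First, Lemma \ref{partial ergodicity} lets us assume without loss of generality that $(A_n)$ itself is balanced asymmetric, and we let $I_1,\ldots,I_k$ denote the islands of $G_A$. Because $R$ is an equivalence relation under balanced asymmetry, for any two agents $i,j$ lying in different islands the ordered pair $(i,j)$ is not highly interactive, hence $\sum_{n} a_{ij}(n)<\infty$. Summing over the finitely many such pairs yields
\begin{equation}
\sum_{n=0}^{\infty}\sum_{\ell\neq m}\sum_{i\in I_\ell,\,j\in I_m}a_{ij}(n)<\infty.
\label{finite-inter}
\end{equation}

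Next, I would construct an auxiliary chain $(B_n)$ obtained from $(A_n)$ by zeroing out all inter-island entries and renormalizing each row to remain stochastic. By (\ref{finite-inter}) we have $\sum_{n}\|A_n-B_n\|<\infty$, so $(A_n)$ is an $l_1$-approximation of $(B_n)$ and, by Lemma \ref{partial ergodicity}, the two chains are simultaneously class-ergodic. Since $(B_n)$ is block diagonal (after relabeling) with one block $(B_n^{(\ell)})$ on each island $I_\ell$, it is class-ergodic if and only if every block chain $(B_n^{(\ell)})$ is ergodic. Each block chain inherits balanced asymmetry from $(A_n)$ up to summable perturbations coming from the renormalization, which may be absorbed through a second application of Lemma \ref{partial ergodicity}.

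Finally, applying Theorem \ref{ergodic} to each block yields that $(B_n^{(\ell)})$ is ergodic if and only if it has the absolute infinite flow property on $I_\ell$. It then remains to check that this property, expressed for sequences of equal-cardinality subsets of $I_\ell$, coincides with "absolute infinite flow over the island $I_\ell$" for the original chain $(A_n)$. This follows because the entries of $B_n^{(\ell)}$ and those of $A_n$ restricted to $I_\ell$ differ by summable quantities, so the sums in (\ref{abs}) — or equivalently their one-sided forms in (\ref{abs-eq}), available by the Remark — are simultaneously infinite.

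The step I expect to be most delicate is the passage from the balanced asymmetry of $(A_n)$ on the full set $S$ to balanced asymmetry of the block chains $(B_n^{(\ell)})$ on the islands, because the complement $\bar S_2$ in (\ref{b-asym}) implicitly includes every other island; verifying that the inter-island contributions are genuinely summable in $n$ and leave the inequality intact up to a rescaling of $M$ is the core technical point, and it is precisely there that the equivalence of (\ref{abs}) and (\ref{abs-eq}) under balanced asymmetry is essential.
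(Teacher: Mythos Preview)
The paper does not prove this theorem here; it is quoted from \cite{Bolouki:12}, so there is no in-paper argument to compare your sketch against.

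Your strategy is the natural one and is presumably how \cite{Bolouki:12} proceeds: use Lemma~2 to conclude that every inter-island entry is summable, zero those entries out and renormalize to obtain a block-diagonal $l_1$-approximation $(B_n)$, and then reduce to Theorem~\ref{ergodic} on each block.

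The step you yourself flag is, however, a genuine gap and not merely a delicacy. From balanced asymmetry of $(A_n)$ on $S$ one gets, for $S_1,S_2\subset I_\ell$ of equal cardinality,
\[
\sum_{i\in S_1}\sum_{j\in I_\ell\setminus S_2}a_{ij}(n)\;\le\;M\sum_{i\in I_\ell\setminus S_1}\sum_{j\in S_2}a_{ij}(n)\;+\;M\sum_{i\notin I_\ell}\sum_{j\in S_2}a_{ij}(n),
\]
with the last term summable in $n$. But an inequality of the form $\mathrm{LHS}(n)\le M\cdot\mathrm{RHS}(n)+\varepsilon_n$ with $\sum_n\varepsilon_n<\infty$ does \emph{not} by itself produce a balanced asymmetric chain within $l_1$-distance of $(B_n^{(\ell)})$: balanced asymmetry is a multiplicative constraint, and summable additive errors are not automatically absorbable into a rescaled constant. (For instance, at times $n$ where $\mathrm{RHS}(n)=0$ the inequality fails outright, and renormalization can create exactly such times.) Consequently you cannot invoke Theorem~\ref{ergodic} on the blocks without further work---either by explicitly constructing an approximating balanced asymmetric chain for each $(B_n^{(\ell)})$, or by reopening the proof of Theorem~\ref{ergodic} in \cite{Bolouki:12} and verifying that the properties it actually uses are stable under summable additive perturbations. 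Until one of these is carried out, the proposal remains an outline rather than a proof.
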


In the following, by employing Theorem \ref{result-semi}, we prove a sufficient condition for a chain to be class-ergodic.
\begin{theorem}
  If chain $(A_n)$ is $l_1$-approximation of a self-confident and type-symmetric chain, it is also class-ergodic.
  \label{self-type}
\end{theorem}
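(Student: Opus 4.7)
The plan is to reduce the statement to a direct application of Theorem \ref{result-semi}. By Lemma \ref{partial ergodicity}, class-ergodicity is an invariant of the $l_1$-approximation equivalence class, so it suffices to show that any self-confident and type-symmetric chain $(B_n)$ is class-ergodic. Since the paper already observes that for self-confident chains type-symmetry coincides with balanced asymmetry, $(B_n)$ is balanced asymmetric, and hence the hypothesis of Theorem \ref{result-semi} is met by taking $(B_n)$ itself as both the chain and its $l_1$-approximation. Thus the whole task collapses to verifying that the absolute infinite flow property holds over each island of the strong interaction digraph $G_B$ induced by $(B_n)$.

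I would then invoke self-confidence a second time: for self-confident chains the absolute infinite flow property is equivalent to the ordinary infinite flow property. It therefore suffices to establish, for each island $I$ and each nonempty proper subset $T \subset I$, that
\begin{equation*}
  \sum_{n=0}^{\infty} \sum_{i \in T,\, j \in I \setminus T} \bigl(b_{ij}(n) + b_{ji}(n)\bigr) = \infty.
\end{equation*}
Here the structure of an island does all the work. An island is an equivalence class of the relation $R$ on $G_B$, so given any $i \in T$ and $j \in I \setminus T$ there exists a directed path in $G_B$ from $i$ to $j$ made of highly interactive edges. This path must step out of $T$ at some point, producing an edge $(k,l)$ with $k \in T$, $l \in I \setminus T$ and $\sum_n b_{kl}(n) = \infty$. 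That single edge already forces the displayed sum to diverge, giving infinite flow between $T$ and $I \setminus T$.

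Assembling the pieces, $(B_n)$ satisfies the hypothesis of Theorem \ref{result-semi} and hence is class-ergodic, and then Lemma \ref{partial ergodicity} transfers class-ergodicity to the original chain $(A_n)$. The main obstacle is essentially bookkeeping rather than analysis: one must carefully invoke the two equivalences ``type-symmetric $+$ self-confident $\Leftrightarrow$ balanced asymmetric $+$ self-confident'' and ``self-confident $\Rightarrow$ (absolute infinite flow $\Leftrightarrow$ infinite flow)'' in the right order, and confirm that ``absolute infinite flow over each island'' as used in Theorem \ref{result-semi} is indeed the property captured by the cut argument above. Once these translations are made, the entire substance of the proof is the short combinatorial observation that any cut of an island is crossed by a highly interactive edge.
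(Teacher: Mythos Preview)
Your proposal is correct and follows the same architecture as the paper's proof: reduce via Lemma~\ref{partial ergodicity} to a genuinely self-confident, type-symmetric chain, observe this is balanced asymmetric, and then verify the hypothesis of Theorem~\ref{result-semi} island by island. The one organizational difference is that the paper verifies the absolute infinite flow property over each island directly by a two-case split on whether the sequence $T(0),T(1),\ldots$ eventually stabilizes; its Case~I is exactly your cut argument (a highly interactive edge must cross any proper cut of a strongly connected island), while its Case~II handles a non-stabilizing sequence by observing that whenever $T(n_k)\neq T(n_k+1)$ some agent lies in $\bar T(n_k+1)\cap T(n_k)$, so self-confidence contributes at least $\delta$ to the sum at that step. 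Your route replaces Case~II by invoking the paper's remark that self-confidence collapses absolute infinite flow to ordinary infinite flow, which is cleaner but is literally the same argument packaged as a lemma; just be explicit that this equivalence, stated in the paper for the global chain, carries over verbatim to each island because its proof uses only the diagonal lower bound $a_{ii}\ge\delta$.
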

\begin{proof}
  From Lemma \ref{partial ergodicity}, to prove class-ergodicity of $(A_n)$, we can assume that $(A_n)$ is self-confident and type-symmetric. These two properties of $(A_n)$ imply that $(A_n)$ is balanced asymmetric. Therefore, according to Theorem \ref{result-semi}, it suffices to show that the absolute infinite flow property holds over each island of the strong interaction digraph $G_A$. Let $I$ be an arbitrary island and $T(0),T(1),\ldots$ be an arbitrary sequence of subsets of $I$ with the same cardinality. Calling the equivalence of Eqs. (\ref{abs-eq}) and (\ref{abs}) for chains that are $l_1$-approximation of balanced asymmetric chains, in the following, we show that Eq. (\ref{abs-eq}) holds by considering the following two cases:

  Case I. The sequence $T(0),T(1),\ldots$ becomes constant after a finite time, i.e., there exist $T \subset I$ and $N \geq 0$ such that $T(i)=T$ for every $n \geq N$. In this case,
  \begin{equation}
    \sum_{n=0}^{\infty} \sum_{i \in \bar{T}(n+1)} \sum_{j \in T(n)} a_{ij}(n) \geq \sum_{n=N}^{\infty} \sum_{i \in \bar{T}} \sum_{j \in T} a_{ij}(n)
    \label{ya khoda}
  \end{equation}
  Clearly, there exist two agents $p \in \bar{T}$ and $q \in T$ such that pair $(p,q)$ is highly interactive. Otherwise, island $I$ would not be strongly connected. Noting that the RHS of ($\ref{ya khoda}$) is not less than
  \begin{equation}
    \sum_{n=N}^{\infty} a_{pq}(n)
  \end{equation}
  which diverges, since $(p,q)$ is highly interactive, the result if proved.

  Case II. The sequence $T(0),T(1),\ldots$ does not converge, i.e., there exists a time subsequence $n_0,n_1,\ldots$ such that $T(n_k) \neq T(n_k+1)$ for every $k=0,1,\ldots$. Clearly,
  \begin{equation}
    \begin{array}{ll}
      \sum_{n=0}^{\infty} \sum_{i \in \bar{T}(n+1)} \sum_{j \in T(n)} a_{ij}(n) \vspace{.05in}\\ \hspace{.5in}\geq \sum_{k=0}^{\infty} \sum_{i \in \bar{T}(n_k+1)} \sum_{j \in T(n_k)} a_{ij}(n_k)
    \label{ya khodaa}
    \end{array}
  \end{equation}
  Since $T(n_k) \neq T(n_k+1)$ and the two subsets are of the same cardinality, there exists an agent that belongs to both $\bar{T}(n_k+1)$ and $T(n_k)$. Hence, due to self-confidence of chain $(A_n)$, we have
  \begin{equation}
   \sum_{i \in \bar{T}(n_k+1)} \sum_{j \in T(n_k)} a_{ij}(n_k) > \delta.
  \end{equation}
  Therefore, the RHS of Eq. (\ref{ya khodaa}) is not less than $\sum_{k=0}^{\infty} \delta$ that diverges. This proves the result again in this case.
\end{proof}


\section{Relationship to known models}

In this section, we apply our main theorems to chains corresponding to different types of models and consensus algorithms found in the literature in order to analyze when their transition chains become ergodic or class-ergodic.

\subsection{JLM Model}

The parameter considered in \cite{Jadba:03} is the heading of each agent. If we write $\theta_i(n)$ as the heading of agent $i$ at moment $n$, the model describing evolution of headings is
\begin{equation}
  \theta_i(n+1)= \frac{1}{1+d_i(n)} [ \theta_i(n)+\sum_{j \in D_i(n)} \theta_j(n) ],
  \label{JLM}
\end{equation}
where $D_i(n)$ and $d_i(n)$ denote respectively the set and the number of neighbors of the agent $i$ at time $n$. It is also assumed that for each $n \geq 0$, if $j \in D_i(n)$, then $i \in D_j(n)$ too (undirected graph).

In \cite{Jadba:03}, the authors proved that a sufficient condition for consensus to occur is existence of an infinite sequence of contiguous, nonempty, bounded, time-intervals $[n_i,n_{i+1})$, $i=0,1,\ldots$, such that across each such interval, the $s$ agents are linked together.

In the following, we wish to apply Theorems \ref{ergodic} and \ref{self-type} to the transition chain of the JLM model. To take advantage of Theorem \ref{self-type}, we show that in the JLM model, the transition chain is both self-confident and type-symmetric. Note that
\begin{equation}
 a_{ii}(n) = 1/(1+d_i(n)) \geq 1/s
\end{equation}
This proves the self-confidence of the chain. To prove the type-symmetry, it suffices to show that the chain is sub-symmetric. If at time $n \geq 0$, we have $j \not\in D_i(n)$, then $i \not\in D_j(n)$ either. Therefore $a_{ij}(n) = a_{ji}(n) = 0$ (consistently with the subsymmetry requirement). If $j \in D_i(n)$ then $i \in D_j(n)$ also. In this case, it is easy to see that $a_{ij}(n)$ and $a_{ji}(n)$ both lie in the interval $[1/2, 1/s]$. Therefore, the subsymmetry condition holds by setting $M = s/2$. Thus, from Theorem \ref{self-type}, we conclude that the chain is class-ergodic. In other words, in the JLM model, unconditional multiple consensus occurs without any additional assumption.

We also note that the chain is balanced asymmetric as well, since self-confidence and type-symmetry imply balanced asymmetry. Thus, from Theorem \ref{ergodic}, we obtain that the absolute infinite flow property is equivalent to the ergodicity of the chain. On the other hand, since the chain is self-confident, the absolute infinite flow property is equivalent to the infinite flow property. Hence, in the JLM model, the infinite flow property is necessary and sufficient for ergodicity of the transition chain.

Graph interpretation of the infinite flow property in the JLM model is as follows. Due to the subsymmetry property of the JLM model, if a pair $(i,j)$ is highly interactive, then $(j,i)$ is highly interactive also. Therefore, in this case, the strong interactions digraph can be considered as an undirected graph. The infinite flow property is now equivalent to connectivity of the strong interactions graph.

 Another equivalent condition to the infinite flow property, that is more similar to the condition derived in \cite{Jadba:03}, is existence of an infinite sequence of contiguous and non empty time-intervals $[n_i,n_{i+1})$, $i \geq 0$, with the property that across each such interval, the $s$ agents are linked together. Note that the boundedness of the time-intervals is not required unlike in \cite{Jadba:03}. More importantly, the condition derived here is not only sufficient, but also necessary for ergodicity of the chain (unconditional occurrence of consensus in the model). On the other hand, unlike \cite{Jadba:03}, without extra conditions, no statement can be made about the speed of convergence to consensus.


\subsection{Models with Finite Range Interactions}

The Krause model \cite{Krause:97} is an example of endogenous models with finite range interactions. These models are special cases of first order models in which interaction rates depend directly on states. In these models, agent $i$ receives information from agent $j$ if and only if the distance between the two agents is less than some pre-specified level $R_i$, which is in general different for distinct agents. In the following, we define the interactions rates between agents. For every agent $i$, we set a decaying function $f_i : R^{\geq 0} \rightarrow R^{\geq 0}$ that vanishes at $R_i$, and define
\begin{equation}
 a_{ij} = \frac{f_i(\|X_i-X_j\|)}{\sum_{k=1}^s f_i(|X_i-X_k|)}
 \label{hoof}
\end{equation}
We consider a particular case of models defined above. Assume that agents have the same range of connectivity, i.e., $R_i = R$ for every $i$,
and use identical decaying functions, i.e., $f_i = f$. In the Krause model, $f(x) = 1$ for $0 \leq x < R$ and $f(x) = 0$ elsewhere.

It can be proved that in this case, the transition chain is self-confident with $\delta = 1/s$. It can also be shown as follows that the transition chain is sub-symmetric. If at time $n \geq 0$, agents $i$ and $j$ do not communicate, then $a_{ij}(n)=a_{ji}(n)=0$. If the two agents communicate, then $f(|x_i(n)-x_j(n)|) > 0$. Using Eq. (\ref{hoof}), we have
\begin{equation}
  \frac{a_{ij}(n)}{a_{ji}(n)} = \frac{\sum_{k=1}^s f(|X_j(n)-X_k(n)|)}{\sum_{k=1}^s f(|X_i(n)-X_k(n)|)}
  \label{fin-ran}
\end{equation}
Noting that $f$ is non increasing and $f(|X_i(n)-X_i(n)|)= f(|X_j(n)-X_j(n)|) = f(0)$, we conclude that the RHS of Eq. (\ref{fin-ran}) lies in interval $[1/s,s]$. Hence, subsymmetry is established by setting $M=s$. The chain being both self-confident and sub-symmetric, it is also type-symmetric. Thus, according to Theorem \ref{self-type}, the chain is class-ergodic, i.e., unconditional multiple consensus occurs.

\subsection{The C-S model}

The C-S (Cucker-Smale) model \cite{Cucker:07} is an example of endogenous consensus models with interaction rates remaining strictly positive. We apply our results to a generalized version of the C-S model \cite{Cucker:07} that describes evolution of positions $X_i$'s and velocities $V_i$'s in a bird flock, in a three dimensional Euclidian space:
\begin{equation}
 \begin{array}{ll}
  X_i(n+1) & \hspace{-2in} = X_i(n) + h V_i(n),\vspace{.05in} \\
  V_i(n+1) & \hspace{-2in} = V_i(n) + \sum_{j\neq i} f (\|X_i(n) - X_j(n) \|)\vspace{.05in} \\
  \hspace{1.6in}(V_j(n) - V_i(n)),
 \end{array}
 \label{cucker-smale model}
\end{equation}
where $f : \textbf{R}^{\geq 0} \rightarrow \textbf{R}^{\geq 0}$ is a non increasing function. Note that in this model, the limiting behavior of velocities is of interest. We have

\begin{equation}
  a_{ij}(n) = f (\|X_i(n) - X_j(n) \|),\,\, \forall i \neq j
\end{equation}
and
\begin{equation}
  a_{ii}(n) = 1- \sum_{j\neq i} f (\|X_i(n) - X_j(n) \|), \,\, \forall i
\end{equation}
Clearly, the transition chain in this algorithm is symmetric. To enforce self-confident, one may require an additional assumption, such as, $f(y) < 1/s$ for any $y \geq 0$. By this assumption, we have $a_{ii}(n) > 1/s$ for every $i=1,\ldots,s$ and $n \geq 0$. Therefore, the chain becomes self-confident. The combination of the self-confidence and type-symmetry of the chain allows an application of Theorem \ref{self-type} to yield the following result.
\begin{theorem}
  Consider the algorithm described by Eq. (\ref{cucker-smale model}). If $f(y)<1/s$ for any $y \geq 0$, then the transition chain is class-ergodic, and consequently, unconditional multiple consensus occurs.
\end{theorem}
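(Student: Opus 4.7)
The plan is to verify the two hypotheses of Theorem \ref{self-type} directly from the explicit form of the transition matrices of the C-S dynamics, and then invoke that theorem (with $(A_n)$ taken as an $l_1$-approximation of itself).

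First, I would read off the transition matrix from Eq.~(\ref{cucker-smale model}): for $i \neq j$, $a_{ij}(n) = f(\|X_i(n) - X_j(n)\|)$, and $a_{ii}(n) = 1 - \sum_{j \neq i} f(\|X_i(n) - X_j(n)\|)$. Symmetry of the off-diagonal entries follows immediately from the symmetry of the Euclidean distance and the fact that a single scalar function $f$ governs all pairs, so $a_{ij}(n) = a_{ji}(n)$ for all $i,j,n$. This makes Eq.~(\ref{type-sym}) hold trivially with $M = 1$, and in particular the chain is type-symmetric.

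Next, I would use the hypothesis $f(y) < 1/s$ for all $y \geq 0$ to bound the diagonal: since the sum defining $a_{ii}(n)$ has $s - 1$ terms each strictly below $1/s$, we get
\begin{equation}
  a_{ii}(n) > 1 - (s-1)\cdot \frac{1}{s} = \frac{1}{s}, \quad \forall i,\; \forall n \geq 0.
\end{equation}
Thus the chain is self-confident with $\delta = 1/s$. As a by-product, every $a_{ii}(n)$ is positive, so $(A_n)$ is indeed a chain of nonnegative row stochastic matrices, justifying our use of the terminology.

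Finally, I would observe that $(A_n)$ is trivially an $l_1$-approximation of itself (the defining sum in Eq.~(\ref{al-equal}) is identically zero), so the hypothesis of Theorem \ref{self-type} is met: $(A_n)$ is an $l_1$-approximation of a self-confident and type-symmetric chain. Applying Theorem \ref{self-type} yields class-ergodicity of $(A_n)$, and the discussion following the definition of class-ergodicity then gives unconditional multiple consensus in the system. I do not foresee a genuine obstacle here, since the prior paragraph already assembles the two key properties; the only subtle point to watch is that the hypothesis $f(y) < 1/s$ must be strict (not just $\leq$), otherwise one cannot guarantee $a_{ii}(n) > \delta$ for a uniform $\delta > 0$ as required by self-confidence.
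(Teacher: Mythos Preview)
Your proposal is correct and mirrors the paper's own argument: the paper also observes that the C-S transition chain is symmetric (hence type-symmetric) and that the hypothesis $f(y)<1/s$ forces $a_{ii}(n)>1/s$ (hence self-confidence), and then invokes Theorem~\ref{self-type}. The only additions you make are the explicit remark that $(A_n)$ is an $l_1$-approximation of itself and the comment on strictness of the bound on $f$, both of which are harmless elaborations of the same route.
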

To state the consensus result for the generalized C-S model, we define parameters $M_x$ and $M_v$ calculated from initial positions and velocities:
\begin{equation}
 M_x = \max_{i,j} \{ \| X_i(0) - X_j(0) \| | 1 \leq i < j \leq s \}
 \label{M_x}
\end{equation}
\begin{equation}
 M_v = \max_{i,j} \{ \| V_i(0) - V_j(0) \| | 1 \leq i < j \leq s \}.
 \label{M_v}
\end{equation}
\begin{theorem}
 For the multi-agent system with dynamics described by Eq. (\ref{cucker-smale model}), assume that $f(y)$ has the following property:
 \begin{equation}
  f(y) < 1/s, \,\, \forall s \geq 0
  \label{f-bound}
 \end{equation}
 Assume also that
 \begin{equation}
  M_v < \frac{s}{3 h} \int_{M_x}^{\infty} f(y) dy.
  \label{assumption-disc}
 \end{equation}
 Then, all agents' velocities converge to a common value. Moreover, there exists a non negative number $R$ such that for every $i,j$, $1 \leq i , j \leq s$,
 \begin{equation}
  \| X_i(n) - X_j(n) \| \leq R, \,\, \forall n \geq 0
 \end{equation}
 \label{cs_erg}
\end{theorem}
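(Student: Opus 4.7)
The plan is to reduce the theorem to an application of Theorem \ref{ergodic}: if positions stay in a bounded region on which $f$ is uniformly positive, then the transition chain $(A_n)$ of the velocity update is self-confident, type-symmetric (by symmetry of $f$), and consequently balanced asymmetric, and it inherits the infinite flow property from the uniform lower bound on its off-diagonal weights. Self-confidence makes infinite flow and absolute infinite flow equivalent, so Theorem \ref{ergodic} then delivers ergodicity, i.e.\ unconditional velocity consensus.

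The core task is therefore to establish $d(n) := \max_{i,j}\|X_i(n)-X_j(n)\| \leq R$ for some finite $R$ and every $n$, under the hypothesis $M_v < \frac{s}{3h}\int_{M_x}^{\infty}f(y)\,dy$. I would rely on two elementary bounds. First, a drift bound $d(n+1) \leq d(n) + h\,v(n)$ with $v(n) := \max_{i,j}\|V_i(n)-V_j(n)\|$, immediate from the first line of (\ref{cucker-smale model}). Second, a Dobrushin-type velocity contraction $v(n+1) \leq (1 - c\,f(d(n)))\,v(n)$ for an explicit $c > 0$, obtained by picking a pair $(i^{*},j^{*})$ realizing $v(n)$, writing $V_{i^{*}}(n+1)-V_{j^{*}}(n+1) = \sum_k(a_{i^{*}k}(n)-a_{j^{*}k}(n))V_k(n)$, and using row stochasticity together with $\min(a_{i^{*}k}(n),a_{j^{*}k}(n)) \geq f(d(n))$. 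The assumption $f < 1/s$ is precisely what allows this lower bound to cover the diagonal entries ($a_{ii}(n) \geq 1/s \geq f(d(n))$); a careful worst-case accounting of off-diagonal transfer versus diagonal mass yields the constant $c = s/3$.

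With these two estimates, I would telescope in two complementary ways. The contraction bound gives $c\sum_{n=0}^{N-1} f(d(n))\,v(n) \leq v(0)-v(N) \leq M_v$. Introducing the concave primitive $F(x) := \int_0^x f(y)\,dy$, concavity yields $F(d(n+1))-F(d(n)) \leq f(d(n))(d(n+1)-d(n)) \leq h\,f(d(n))\,v(n)$; the second step uses the drift bound together with $f \geq 0$ and is valid regardless of the sign of $d(n+1)-d(n)$, sparing me from tracking monotonicity of $d$. Summing and combining the two chains gives $\int_{M_x}^{d(N)} f(y)\,dy = F(d(N))-F(M_x) \leq hM_v/c = 3hM_v/s$, which by hypothesis is strictly below $\int_{M_x}^{\infty} f$. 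Hence $d(N) \leq R$ for every $N$, where $R := \sup\{x : \int_{M_x}^{x}f \leq 3hM_v/s\}$ is finite.

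The step I expect to be most delicate is the velocity contraction with the correct constant $s/3$, which requires a careful comparison between diagonal self-confidence and off-diagonal transfer; the telescoping via the concave primitive then falls out smoothly and is automatically tolerant to non-monotonicity of $d(n)$. Once positions are bounded by $R$, the strict gap in the integral inequality above allows me to enlarge $R$ slightly while keeping $f(R) > 0$, so that $a_{ij}(n) \geq f(R) > 0$ for all $i \neq j$ and $n$; this yields the infinite flow property of $(A_n)$, and Theorem \ref{ergodic} completes the proof.
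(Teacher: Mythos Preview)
Your approach is correct and is a genuine alternative to the paper's argument. The paper works coordinate-wise: it sorts each velocity component, tracks the sum of coordinate spreads $z(n)=\sum_{r=1}^{3}(z_{sr}(n)-z_{1r}(n))$, and bounds all pairwise distances by the \emph{monotone} surrogate $g(n)=M_x+h\sum_{m<n}z(m)$. It then proves the contraction $z(n+1)\le(1-sf(g(n)))z(n)$, telescopes via $hz(n)=g(n+1)-g(n)$ and $f(g(n))z(n)\ge h^{-1}\int_{g(n)}^{g(n+1)}f$, and argues by contradiction: if consensus failed, $\sum z(n)=\infty$, forcing $\lim z(n)<0$. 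The factor $3$ enters only through $z(0)\le 3M_v$. Boundedness of positions is a corollary of $\sum z(n)<\infty$.

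You instead stay with the Euclidean quantities $d(n)$ and $v(n)$, prove position boundedness \emph{first} via the concave-primitive trick (which absorbs the non-monotonicity of $d(n)$ that the paper sidesteps with $g(n)$), and only afterwards invoke Theorem~\ref{ergodic} to conclude velocity consensus. Two remarks. First, the Dobrushin argument you sketch, writing $V_{i^*}(n+1)-V_{j^*}(n+1)$ as $\lambda$ times a difference of two points in the convex hull of $\{V_k(n)\}$, actually delivers the contraction constant $c=s$, not $s/3$: all entries (including the diagonal, thanks to $f<1/s$) are at least $f(d(n))$, so $\sum_k\min(a_{i^*k},a_{j^*k})\ge sf(d(n))$. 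Your route therefore yields an even sharper threshold than the statement requires; the paper's factor $3$ is an artifact of summing three coordinate spreads. Second, once $d(n)\le R$ with $f(R)>0$, you do not really need Theorem~\ref{ergodic}: the same contraction gives $v(n+1)\le(1-sf(R))v(n)$, i.e.\ geometric decay. Either finishing move is valid.
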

Unlike the models described previously, Theorem \ref{cs_erg} is not an immediate result of Theorems \ref{ergodic} and \ref{self-type}. However, to prove Theorem \ref{cs_erg}, we employ a technique similar to that used in the proof of Theorem \ref{ergodic} in \cite{Bolouki:12}.
\begin{proof}
 For every $i=1,\ldots,s$, let $V_{i1}(n), V_{i2}(n), V_{i3}(n)$ be components of $V_i(n)$, i.e.,
 \begin{equation}
  V_i(n) = [V_{i1}(n) \,\, V_{i2}(n) \,\, V_{i3}(n)]^T
 \end{equation}
 It is straight forward to verify that $V_{ir}$'s ($r=1,2,3$) satisfy for straightforwardly identifiable coefficients $a_{ij}$, the same update equation as $V_i$'s do, i.e.,
 \begin{equation}
  V_{ir}(n+1) - V_{ir}(n) = \sum_{j \neq i} a_{ij}(n) (V_{jr}(n) - V_{ir}(n))
  \label{2}
 \end{equation}
 One can rewrite Eq. (\ref{2}) as the following:
 \begin{equation}
   \begin{array}{ll}
    V_{ir}(n+1) & \hspace{-.1in} = \left( 1-\sum_{j \neq i} a_{ij}(n) \right) V_{ir}(n)\vspace{.05in} \\
    & + \sum_{j \neq i} a_{ij}(n) V_{jr}(n)
  \label{convex}
  \end{array}
 \end{equation}
 Let us define $z_{ir}(n):\mathbf{R^{\geq 0}} \rightarrow \mathbf{R}$, $1 \leq i \leq s$, $r = 1,2,3$ from $V_{ir}(n)$'s as follows. At every time $n \geq 0$, $z_{ir}(n)$ is equal to the
 $i$th least number among $V_{1r}(n),\ldots,V_{sr}(n)$.

 Note that the coefficients in the RHS of Eq. (\ref{convex}) are all positive and add up to 1. This means that $V_{ir}(n+1)$ is a convex combination of the values $V_{1r}(n),\ldots,V_{sr}(n)$. Thus, for every $r$, $1 \leq r \leq 3$, interval $[z_{1r}(n),z_{sr}(n)]$, which is the smallest interval containing all the values $V_{ir}(n)$'s, shrinks during time. Particularly, this shows that $V_{ir}(n)$, and consequently $z_{ir}(n)$, are bounded for every $i=1,\ldots,s$ and $r=1,2,3$.
 For consensus to occur, we require that interval $[z_{1r}(n),z_{sr}(n)]$ converges to a point for every $r$, $1 \leq r \leq 3$. In the following, we investigate how these intervals shrink with time. Let us define:
 \begin{equation}
  z(n) = \sum_{r = 1}^3 (z_{sr}(n) - z_{1r}(n)).
 \end{equation}
 We know that
 \begin{equation}
   \begin{array}{ll}
     X_i(n) - X_j(n) & \hspace{-.1in}= X_i(0) - X_j(0)\vspace{.05in}\\
     & + h \sum_{m=0}^{n-1} (V_i(m) - V_j(m))
   \end{array}
 \end{equation}
 Thus,
 \begin{equation}
   \begin{array}{ll}
    \|X_i(n) - X_j(n)\| & \hspace{-.1in}\leq \|X_i(0) - X_j(0)\|\vspace{.05in}\\
    &+ h \sum_{m=0}^{n-1} \|V_i(m) - V_j(m)\|
  \label{4}
    \end{array}
 \end{equation}
 However,
 \begin{equation}
  \begin{array}{ll}
   \|V_i(m) - V_j(m)\| & \hspace{-.1in}\leq \sum_{r=1}^3 |V_{ir}(m) - V_{jr}(m)| \vspace{.05in} \\
                             & \hspace{-.1in}\leq \sum_{r=1}^3 (z_{sr}(m) - z_{1r}(m)) \vspace{.05in} \\
                             & \hspace{-.1in}\equiv z(m)
  \end{array}
  \label{5}
 \end{equation}
 Eqs. (\ref{M_x}), (\ref{4}) and (\ref{5}) imply
 \begin{equation}
  \|X_i(n) - X_j(n)\| \leq M_x + h \sum_{m=0}^{n-1} z(m)
  \label{18}
 \end{equation}
 From $f$ being non increasing, we obtain
 \begin{equation}
  f(\|X_i(n) - X_j(n)\|) \geq f(M_x + h \sum_{m=0}^{n-1} z(m))
  \label{low}
 \end{equation}
 Note that the RHS of Eq. (\ref{low}) above is independent of $i,j$. Defining
 \begin{equation}
  g(n) = M_x + h \sum_{m=0}^{n-1} z(m),
 \end{equation}
 Eq. (\ref{low}) implies
 \begin{equation}
  a_{ij}(n) \geq f(g(n)), \,\, \forall i,j, \,\, \forall n \geq 0
  \label{9'}
 \end{equation}
 We know recall Eq. (\ref{convex}). It is straight forward to verify that all the coefficients in the RHS of Eq. (\ref{convex}) lie between $f(g(n))$ and $1-(s-1)f(g(n))$. Thus, as the sum of coefficients is 1, to find a lower bound for the value of $V_{ir}(n+1)$, we put higher weights on lower valued $V_{jr}$'s, in particular, $1-(s-1)f(g(n))$ on the least one, which is $z_{1r}(n)$ and $f(g(n))$ on the rest of them. Hence, we conclude
 \begin{equation}
   \begin{array}{ll}
     V_{ir}(n+1) & \hspace{-.1in} \geq \Big(1-(s-1)f(g(n))\Big)z_{1r}(n)\vspace{.05in}\\
      &+ \sum_{j=2}^s f(g(n))z_{jr}(n)
   \end{array}
 \end{equation}
 As a result,
 \begin{equation}
   \begin{array}{ll}
  z_{1r}(n) & \hspace{-.1in} \geq \Big(1-(s-1)f(g(n)) \Big) z_{1r}(n)\vspace{.05in}\\
  &+ \sum_{j=2}^s f(g(n))z_{jr}(n)
  \label{10}
   \end{array}
 \end{equation}
 Using the opposite process to build an upper bound for the value of $V_{ir}(n+1)$, we obtain:
 \begin{equation}
   \begin{array}{ll}
  z_{sr}(n) & \hspace{-.1in} \leq \Big(1-(s-1)f(g(n)) \Big) z_{sr}(n)\vspace{.05in}\\
  & + \sum_{j=1}^{s-1} f(g(n))z_{jr}(n)
  \label{11}
   \end{array}
 \end{equation}
 Subtracting Eq. (\ref{10}) from Eq. (\ref{11}) implies
 \begin{equation}
   \begin{array}{ll}
  z_{sr}(n+1) - z_{1r}(n+1)& \hspace{-.1in} \leq
   \vspace{.05in}\\
  &\hspace{-1in}\Big(1-s f(g(n))\Big) \Big(z_{sr}(n) - z_{1r}(n)\Big)
  \label{12}
   \end{array}
 \end{equation}
 By adding up Eq. (\ref{12}) for $r=1,2,3$, we obtain
 \begin{equation}
  z(n+1) \leq \Big(1-s f(g(n))\Big) z(n)
  \label{13}
 \end{equation}
 or equivalently,
 \begin{equation}
  z(n+1) - z(n) \leq -sf(g(n)) z(n)
  \label{egh}
 \end{equation}
 We now note that $h z(n) = g(n+1) - g(n)$. Since $f$ is non increasing and $g(n+1) - g(n) = h z(n) \geq 0$, we have
 \begin{equation}
   \begin{array}{ll}
     f(g(n)) z(n) & \hspace{-.1in} = f(g(n)) \frac{g(n+1)-g(n)}{h} \vspace{.05in} \\
     & \hspace{-.1in}\geq \frac{1}{h} \int_{g(n)}^{g(n+1)}f(y)dy
  \label{eghz}
   \end{array}
 \end{equation}
 Eqs. (\ref{egh}) and (\ref{eghz}) imply
 \begin{equation}
  z(n+1) - z(n) \leq \frac{-s}{h} \int_{g(n)}^{g(n+1)} f(y) dy
  \label{eghzz}
 \end{equation}
 The above equation holds for every $n \geq 0$. If we substitute variable $n$ in Eq. (\ref{eghzz}) with $n'$ and sum it up for $n' = 1,\ldots,n-1$. We obtain
 \begin{equation}
  z(n) - z(0) \leq \frac{-s}{h} \int_{g(0)}^{g(n)} f(y) dy
 \end{equation}
 Recalling the definition of $g(n)$ we conclude
 \begin{equation}
  z(n) - z(0) \leq \frac{-s}{h} \int_{M_x}^{M_x + h \sum_{m=0}^{n-1} z(m)} f(y) dy
  \label{ajab}
 \end{equation}
 If consensus does not occur, then $\sum_{m=0}^{\infty} z(n)$ diverges. Thus by taking $n$ to infinity and noting that $\lim_{n \rightarrow \infty} z(n)$ exists as $z(n)$ is non increasing and non negative, if consensus were to fail, Eq. (\ref{ajab}) would be modified as
 \begin{equation}
  \lim_{n \rightarrow \infty}z(n) - z(0) \leq \frac{-s}{h} \int_{M_x}^{\infty} f(y) dy
  \label{ajabz}
 \end{equation}
 On the other hand, according to our assumption Eq. (\ref{assumption-disc}), we know that
 \begin{equation}
  \begin{array}{ll}
   z(0) & \hspace{-.1in} = \sum_{r=1}^3 (z_{sr}(0)-z_{1r}(0)) \vspace{.05in}\\
        & \hspace{-.1in} = \sum_{r=1}^3 \max_{i,j} \{ (V_{ir}(0) - V_{jr}(0) \} \vspace{.05in} \\
        & \hspace{-.1in} \leq \sum_{r=1}^3 \max_{i,j} \{ (\| V_{i}(0) - V_{j}(0) \| \} = 3 M_v \vspace{.05in}\\
        & \hspace{-.1in} < \frac{s}{h} \int_{M_x}^{\infty} f(y) dy
  \end{array}
  \label{ajabzz}
 \end{equation}
 Eqs. (\ref{ajabz}) and (\ref{ajabzz}) together imply that
 \begin{equation}
  \lim_{n \rightarrow \infty}z(n) < 0
 \end{equation}
 which is a contradiction, since $z(n)$ is non negative. Hence, consensus must occur and moreover, $\sum_{n=1}^{\infty} z(n)$ converges. Recalling Eq. (\ref{18}) we conclude that $\|X_i(n) - X_j(n)\|$ is  bounded for every $i,j$, i.e., there is $R \geq 0$ such that
 \begin{equation}
  \| X_i(n) - X_j(n) \| \leq R, \,\, \forall i,j, \,\, \forall n \geq 0
  \label{100}
 \end{equation}

\end{proof}

Applying Theorem \ref{cs_erg} to the C-S model with
\begin{equation}
  f(y) = \frac{K}{(\sigma^2 + y^2)^{\beta}}
  \label{ine}
\end{equation}
we obtain the following.
\begin{corollary}
 Let the dynamics of a multi agent system be described by Eq. (\ref{cucker-smale model}) with $f$ defined in Eq. (\ref{ine}). Assume that $K / \sigma^{2\beta} < 1/s$. Then, under any of the following conditions, agents velocities converge to a common value:
  \begin{enumerate}
   \item $\beta \leq 1/2$
   \item $\beta > 1/2$ and
    \begin{equation}
     M_v < \frac{sK}{3h(2\beta -1)(M_x + \sigma)^{2\beta -1}}
     \label{my-cucker-1}
    \end{equation}
  \end{enumerate}
 \end{corollary}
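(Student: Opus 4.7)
The plan is to derive the corollary as a direct specialization of Theorem \ref{cs_erg} applied to the specific choice $f(y)=K/(\sigma^2+y^2)^{\beta}$, so the entire task reduces to verifying the two hypotheses of that theorem: the pointwise bound Eq. (\ref{f-bound}), and the initial-condition bound Eq. (\ref{assumption-disc}).

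The pointwise bound is immediate and common to both cases. Since $f$ is non-increasing on $[0,\infty)$, its supremum is $f(0)=K/\sigma^{2\beta}$, which by assumption is strictly less than $1/s$. Hence Eq. (\ref{f-bound}) holds regardless of $\beta$, and the only remaining work is to verify Eq. (\ref{assumption-disc}) in each case. In case~(1), $\beta\le 1/2$, the integrand decays no faster than $y^{-1}$ at infinity, so the tail $\int_{M_x}^{\infty} f(y)\,dy$ diverges; the right-hand side of Eq. (\ref{assumption-disc}) is then $+\infty$, and the inequality is vacuously satisfied by any finite $M_v$. Theorem \ref{cs_erg} then yields consensus.

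In case~(2), $\beta>1/2$, the tail integral converges and an explicit lower bound is required. The key inequality I would use is the elementary estimate $\sigma^2+y^2\le(\sigma+y)^2$ for $y,\sigma\ge 0$, which, after raising to the $\beta$-power and substituting $u=\sigma+y$, gives
\begin{equation}
\int_{M_x}^{\infty}\frac{K\,dy}{(\sigma^2+y^2)^{\beta}} \;\ge\; \int_{M_x+\sigma}^{\infty}\frac{K\,du}{u^{2\beta}} \;=\; \frac{K}{(2\beta-1)(M_x+\sigma)^{2\beta-1}}.
\end{equation}
Multiplying by $s/(3h)$ and combining with hypothesis Eq. (\ref{my-cucker-1}) shows that $M_v<(s/(3h))\int_{M_x}^{\infty}f(y)\,dy$, so Eq. (\ref{assumption-disc}) is satisfied and Theorem \ref{cs_erg} again produces consensus.

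There is no substantive obstacle here; the corollary is essentially a reformulation of Theorem \ref{cs_erg} in concrete terms. The only mildly delicate choice is the comparison function: replacing $\sigma^2+y^2$ by $(\sigma+y)^2$ is preferable to, say, bounding below by $y^{2\beta}$ or by $\max(\sigma,y)^{2\beta}$, because it preserves the leading $y^{-2\beta}$ decay, integrates to a clean closed form, and produces precisely the threshold appearing on the right-hand side of Eq. (\ref{my-cucker-1}); any less careful estimate would still prove convergence but would weaken the explicit bound.
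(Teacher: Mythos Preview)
Your proposal is correct and follows exactly the approach the paper intends: the corollary is stated immediately after Theorem~\ref{cs_erg} with only the sentence ``Applying Theorem~\ref{cs_erg} to the C-S model with $f$ defined in Eq.~(\ref{ine}) we obtain the following,'' and you have supplied precisely the missing verification of hypotheses (\ref{f-bound}) and (\ref{assumption-disc}), including the divergence of the tail integral for $\beta\le 1/2$ and the lower bound via $\sigma^2+y^2\le(\sigma+y)^2$ that recovers the exact constant in (\ref{my-cucker-1}).
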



\section{Concluding Remarks}

In this paper, we analyzed the limiting behavior of the JLM model, the Cucker-Smale model, and the Krause model, using our general theorems about consensus and multiple consensus. In Theorem \ref{ergodic}, we presented a necessary and sufficient condition for chains that are $l_1$-approximation of balanced asymmetric chains to be ergodic. Noting that the transition chain in the JLM model is balanced asymmetric, we found a necessary and sufficient condition for unconditional consensus to occur in the JLM model. Theorem \ref{result-semi} provides a necessary and sufficient condition for class-ergodicity of those chains that are $l_1$-approximation of balanced asymmetric chains. Theorem \ref{self-type} is a non trivial especial case of Theorem \ref{result-semi}, that contains a sufficient condition for class-ergodicity. Theorems \ref{result-semi} and \ref{self-type} led us to prove unconditional multiple consensus in various known models: the JLM model, finite range interactions models, the Krause model for instance, and the generalized version of the C-S model. Furthermore, we obtained a sufficient condition for consensus in the C-S model by employing elementary methods.

In future work, we wish to investigate the rate of convergence when consensus or multiple consensus occurs in a system. In particular, we are interested in necessary and sufficient conditions for exponentially fast convergence. We also wish to extend our results to random networks as well as systems with large population of agents.


%

\end{document}